\documentclass{article}
\usepackage{amsmath}
\usepackage{amssymb}
\usepackage[english] {babel}

\def\a{\alpha} \def\b{\beta} \def\d{\delta} \def\e{\epsilon} \def\f{\varphi}
\def\l{\lambda} \def\s{\sigma} \def\R{\mathbb{R}} \def\C{\mathbb{C}}
 \def\I{\mathbb{I}}
 \def\minus{\smallsetminus}
\def\({\left(} \def\){\right)}
\def\<{\langle} \def\>{\rangle}
 \def\bx{\hspace*{\fill}$\Box$\vspace{1ex}}

\def\N{\mathbb{N}}

\def\vc{{V^{\C}}}
\def\dc{\d_\C} \def\ec{\e_\C}

\newenvironment{proof}{\textit{Proof:}}{\bx}

\newcommand\ts{\otimes}

\newcommand{\ie}{i.e.}

\newcommand{\cf}{c.f.}
\renewcommand{\phi}{\varphi}

\renewcommand{\le}{\leqslant}

\DeclareMathOperator{\spann}{span}

\DeclareMathOperator{\End}{End}
\DeclareMathOperator{\og}{O}
\DeclareMathOperator{\rep}{rep}

\newtheorem{thm}{Theorem}[section]
\newtheorem{lma}[thm]{Lemma}
\newtheorem{prop}[thm]{Proposition}

\title{Classification of pairs of rotations in finite-dimensional Euclidean space}

\author{Erik Darp\"{o}\\ {\small \textit{Matematiska Institutionen, Uppsala
  Universitet,}}\\{\small\textit{Box 480, S-75106 Uppsala,
  Sweden.}}\\{\small\texttt{erik.darpo@math.uu.se}}}  

\begin{document}
\selectlanguage{english}
\date{} 
\maketitle 

\textit{Dedicated to Fred Van Oystaeyen, on the occasion of his sixtieth birthday}

\begin{abstract}
A rotation in a Euclidean space $V$ is an orthogonal map $\d\in\og(V)$ which acts locally
as a plane rotation with some fixed angle $a(\d)\in[0,\pi]$. 
We give a classification of all finite-dimensional representations of the real algebra
$\R\<X,Y\>$ which are given by rotations, up to orthogonal isomorphism.
\end{abstract}

\noindent
Mathematics Subject Classification 2000: 15A21 (16G20, 15A57, 17A80) \\
Keywords: Rotation, rotational representation, irreducible representation, invariant
subspace, classification.

\section{Definitions and results} \label{intro}

Let $V=(V,\<\,\>)$ be a Euclidean space. A \emph{rotation} in $V$ is an orthogonal
endomorphism $\d\in\og(V)$ such that
\begin{enumerate}
\item $\d^2(v)\in\spann\{v,\d(v)\}$ and
\item $\<v,\d(v)\>=\<w,\d(w)\>$
\end{enumerate}
for all unit vectors $v,w\in V$.\footnote{The first axiom is redundant if $V$ is
  finite-dimensional. An orthogonal map satisfying only 1 is either a rotation
  or a reflection in a proper, non-trivial subspace of $V$.}
By the \emph{angle} of $\d$ then is meant the number
$a(\d)=\arccos\<v,\d(v)\>\in[0,\pi]$. If $a(\d)\in\{0,\pi\}$, then $\d=\pm\I_V$, where
$\I_V$ denotes the identity map on $V$. If $a(\d)\in]0,\pi[$, we call $\d$ a \emph{proper}
rotation. 

Given any orthogonal operator $\s$ on a finite-dimensional Euclidean space $V$, there
exists an orthonormal basis $\underline{e}$ of $V$ such that the matrix of $\s$ with
respect to $\underline{e}$ has the form
\begin{equation} \label{ortnf}
  [\s]_{\underline{e}}=R_{\a_1}\oplus\cdots\oplus R_{\a_k}\oplus\I_l\oplus-\I_m
  \mbox{ with }
  \a_i\in]0,\pi[ \mbox{ and }k,l,m\in\N
\end{equation}
where
\begin{equation*}
  R_\a=\begin{pmatrix} \cos\a & -\sin\a \\ \sin\a & \cos\a \end{pmatrix}\in\og_2(\R),
  \quad
  A\oplus B=
  \begin{pmatrix}  A\\&B  \end{pmatrix}
\end{equation*}
and $\I_n$ is the identity matrix of size $n\times n$.
The presentation in (\ref{ortnf}) is unique up to permutations of the summands
$R_{\a_i}$.
This \emph{structure theorem for orthogonal operators} \cite{gantmacher} plays an
important role in the present article.

In view of the above description, the map $\s$ is a proper rotation if and only if
there exists an orthonormal basis $\underline{e}$ of $V$ such that
$[\s]_{\underline{e}}=R_\a\oplus\cdots\oplus R_\a$ for some $\a\in]0,\pi[$.

The structure theorem provides complete information about the behaviour of a single
orthogonal operator on $V$. Our main result, formulated in the three propositions below,
gives a corresponding picture for pairs $(\d,\e)$ of rotations in a finite-dimensional
Euclidean space.

\begin{prop} \label{decomp}
  Let $\d$ and $\e$ be rotations in a finite-dimensional Euclidean space $V$.
  The space $V$ decomposes into an orthogonal direct sum of subspaces, each of which is
  invariant under $\d$ and $\e$, and has dimension either 1, 2 or 4.
\end{prop}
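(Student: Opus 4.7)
The plan is to reduce to the case of two proper rotations, recast the problem in terms of orthogonal complex structures, and split $V$ using the eigenspaces of an auxiliary self-adjoint operator built from these.

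If either $\d$ or $\e$ equals $\pm\I_V$, every subspace of $V$ is invariant under it, and the structure theorem applied to the remaining operator already yields an orthogonal decomposition into invariant subspaces of dimensions $1$ and $2$. So I assume both are proper rotations with angles $\a,\b\in\,]0,\pi[$. The structure theorem gives $\d+\d^{-1}=2\cos\a\cdot\I_V$, whence $J_\d:=(\sin\a)^{-1}(\d-\cos\a\cdot\I_V)$ is a skew-symmetric orthogonal operator with $J_\d^2=-\I_V$, an orthogonal complex structure on $V$; define $J_\e$ analogously. Since $\d$ and $\e$ are real-affine combinations of $\I_V$ and $J_\d$, respectively $J_\e$, a subspace of $V$ is $\{\d,\e\}$-invariant if and only if it is $\{J_\d,J_\e\}$-invariant, and I would work from this reformulation.

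I then introduce $T:=J_\d J_\e+J_\e J_\d$. Using $J_\d^{-1}=-J_\d$ and likewise for $J_\e$, one has $T=J_\d J_\e+(J_\d J_\e)^{-1}$, which is self-adjoint with spectrum in $[-2,2]$; a short calculation shows that $T$ commutes with both $J_\d$ and $J_\e$. Its eigenspaces $V_c$ therefore give an orthogonal decomposition of $V$ into $\{J_\d,J_\e\}$-invariant summands on which the relation $J_\d J_\e+J_\e J_\d=c\,\I_{V_c}$ holds, and it remains to decompose each $V_c$. When $c=2$, the identity $(J_\d+J_\e)^2=T-2\I=0$, combined with the skew-symmetry (hence normality) of $J_\d+J_\e$, forces $J_\e=-J_\d$ on $V_c$, and $J_\d$ alone decomposes $V_c$ orthogonally into $2$-dimensional invariant subspaces. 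The case $c=-2$ is symmetric. For $|c|<2$ I introduce
$$J_\e':=(1-c^2/4)^{-1/2}\(J_\e+(c/2)J_\d\)$$
and check by direct computation that $J_\e'$ is again skew-symmetric orthogonal with $(J_\e')^2=-\I$ and $J_\d J_\e'+J_\e' J_\d=0$. Hence $J_\d$ and $J_\e'$ generate an action of the quaternions $\H$ on $V_c$ by orthogonal transformations; for any unit vector $v\in V_c$ the tuple $v,J_\d v,J_\e' v,J_\d J_\e' v$ is orthonormal (each of $J_\d,J_\e',J_\d J_\e'$ being skew-symmetric), spans a $4$-dimensional $\H$-invariant subspace, and its orthogonal complement is again $\H$-invariant, permitting iteration.

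The main obstacle is the middle case $|c|<2$: pinning down the correct normalisation of $J_\e'$ and verifying that the resulting $\H$-module structure is orthogonal. Once this is done, every $\{J_\d,J_\e'\}$-invariant subspace is automatically $\{J_\d,J_\e\}$-invariant via $J_\e=\sqrt{1-c^2/4}\,J_\e'-(c/2)J_\d$, and hence $\{\d,\e\}$-invariant, producing the required summands of dimension $4$.
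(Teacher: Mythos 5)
Your proof is correct, and it takes a genuinely different route from the paper. The paper complexifies $V$, passes to the four eigenspaces $A=\ker(\d_\C-e^{i\a}\I)$, $B$, $C$, $D$ of $\d_\C$ and $\e_\C$, and builds an antilinear operator $T\colon A\to A$; since $T^2$ is $\C$-linear it has an eigenvector, and this eigenvector produces (via a lemma translating $\kappa$-invariant complex subspaces back to real ones) a $(\d,\e)$-invariant real subspace of dimension $2$ or $4$. You instead stay entirely in the real category: you replace each proper rotation by the orthogonal complex structure $J_\d=(\sin\a)^{-1}(\d-\cos\a\,\I_V)$ (which is exactly the operator $\rho_\d$ the paper introduces later, in Section 3), and decompose $V$ by the eigenspaces of the self-adjoint operator $J_\d J_\e+J_\e J_\d$, which commutes with both structures. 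On each eigenspace the pair either collapses to a single complex structure (eigenvalue $\pm2$, giving $2$-dimensional summands) or normalises to an anticommuting pair generating an orthogonal $\H$-action (giving $4$-dimensional summands). All the verifications you flag as needing checking do go through: $(J_\e+(c/2)J_\d)^2=(-1+c^2/4)\I$ so the normalisation is right, and the orthonormality of $v,J_\d v,J_\e'v,J_\d J_\e'v$ follows from skew-symmetry of $J_\d$, $J_\e'$ and $J_\d J_\e'$ together with orthogonality. Your approach is arguably more elementary (no complexification, no antilinear operators) and it exposes the quaternionic structure of the $4$-dimensional irreducibles explicitly; moreover the eigenvalue $c$ is, up to sign and a factor $2$, the cosine of the invariant $\theta$ that the paper later uses to classify $(\mathfrak{IR}_o)_4$, so your decomposition already sorts the $4$-dimensional summands by isomorphism type. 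What the paper's heavier machinery buys in exchange is Proposition~\ref{clmt}, the sharper criterion for when a $2$-dimensional invariant subspace exists, which is not immediate from your argument.
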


Every pair $(\s,\tau)$ of linear endomorphisms of a real vector space $V$ gives rise to a
representation in $V$ of the free associative algebra $\R\<X,Y\>$ with generators $X,Y$,
via the algebra morphism $\R\<X,Y\>\to \End(V)$ determined by 
$X\mapsto\s,\;Y\mapsto\tau$. We denote this representation again by $(\s,\tau)$.
If $V$ is Euclidean and $\d,\e$ are rotations in $V$, we call $(\d,\e)$ a
\emph{rotational} representation. We denote by $\mathfrak{R}$ the category of all
finite-dimensional rotational representations of $\R\<X,Y\>$.
Given rotations $\d,\e\in\og(V)$ and $\s,\tau\in\og(W)$, a morphism
$(\d,\e)\to(\s,\tau)$ in $\mathfrak{R}$ is a linear map $\f:V\to W$ which decomposes as
$\f=\psi\oplus0:U\oplus\ker\f\to W$ where $U=(\ker\f)^\perp\subset V$ is the orthogonal
complement of $\ker\f$, and the map $\psi:U\to W$ is orthogonal.
Thus $\mathfrak{R}$ is a non-full subcategory of $\rep_f(\R\<X,Y\>)$, the
category of finite-dimensional representations of $\R\<X,Y\>$.

We remark that $\mathfrak{R}$ is closed under direct summands, but not under direct sums:
Any subrepresentation of a rotational representation is again rotational. However, the
direct sum of two rotations with different angles is not a rotation, and therefore, the
sum of two objects in $\mathfrak{R}$ is in general not a rotational representation.

If $\s\in\og(V)$ and $U\subset V$ is an invariant subspace for $\s$, then the orthogonal
complement $U^\perp\subset V$ of $U$ is also invariant under $\s$.
This means that if $\d,\e$ are rotations in $V$ and $U\subset V$ is invariant under $\d$
and $\e$, then the representation $(\d,\e)$ decomposes as
$(\d,\e)=(\d|_U,\e|_U)\oplus(\d|_{U^\perp},\e|_{U^\perp})$.
Thus a rotational representation of $\R\<X,Y\>$ is indecomposable if and only if it is
irreducible, and every finite-dimensional rotational representation can be decomposed into
a direct sum of irreducible representations. 
Moreover, the next proposition asserts that this decomposition is essentially unique.

\begin{prop} \label{ks}
  The category $\mathfrak{R}$ has the Krull-Schmidt property:
  If $(\d,\e)\in\mathfrak{R}$, and $(\d,\e)=(\s_1,\tau_1)\oplus\cdots\oplus(\s_k,\tau_k)$
  and $(\d,\e)=(\s'_1,\tau'_1)\oplus\cdots\oplus(\s'_l,\tau'_l)$ are decompositions of
  $(\d,\e)$ into irreducible subrepresentations, then $k=l$ and there exists a permutation
  $f\in S_k$ such that $(\s_{f(i)},\tau_{f(i)})$ is isomorphic to $(\s'_i,\tau'_i)$ for
  all $i\le k$.
\end{prop}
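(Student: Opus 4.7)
The plan is to reduce to the Krull--Schmidt theorem in the ambient category $\rep_f(\R\<X,Y\>)$ of all finite-dimensional $\R\<X,Y\>$-representations, which satisfies Krull--Schmidt by the usual Fitting argument: for any indecomposable $M\in\rep_f(\R\<X,Y\>)$, the ring $\End_{\R\<X,Y\>}(M)$ is a finite-dimensional $\R$-algebra without non-trivial idempotents, hence local. To transfer this conclusion to the non-full subcategory $\mathfrak{R}$, two comparison facts are needed: (i) every $\mathfrak{R}$-irreducible object is $\rep_f$-indecomposable, and (ii) every $\rep_f$-isomorphism between two objects of $\mathfrak{R}$ can be realised by an $\mathfrak{R}$-morphism. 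Fact (i) is immediate from the observation already made above: any $\rep_f$-invariant subspace of an object of $\mathfrak{R}$ has an invariant orthogonal complement, yielding a non-trivial $\mathfrak{R}$-decomposition that would contradict irreducibility.

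The main technical step is (ii), which I would handle by polar decomposition. Given an $\rep_f$-isomorphism $\f:V \to W$ between $(\d,\e)\in\mathfrak{R}(V)$ and $(\s,\tau)\in\mathfrak{R}(W)$, fix orthonormal bases identifying both with $\R^n$. From $\f\d = \s\f$ and $\s^T = \s^{-1}$ one derives $\d\f^T = \f^T\s$, whence $\f^T\f\d = \f^T\s\f = \d\f^T\f$; similarly $\f^T\f$ commutes with $\e$. Its positive square root $P := (\f^T\f)^{1/2}$, being a polynomial in $\f^T\f$ by the spectral theorem, therefore commutes with $\d$ and $\e$ as well; hence $\psi := \f P^{-1}$ is orthogonal and still intertwines the two representations, providing the required orthogonal isomorphism.

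The proposition then follows: two decompositions of $(\d,\e)$ into $\mathfrak{R}$-irreducibles are, by (i), two decompositions into $\rep_f$-indecomposables, and Krull--Schmidt in $\rep_f(\R\<X,Y\>)$ supplies a permutation matching summands up to $\rep_f$-isomorphism, which by (ii) lifts to $\mathfrak{R}$-isomorphism. The principal obstacle is (ii); since $\mathfrak{R}$ is a non-full subcategory, $\rep_f(\R\<X,Y\>)$ could a priori identify strictly more objects than $\mathfrak{R}$ does, and it is precisely the polar-decomposition trick, crucially using the orthogonality of all four operators to force $\f^T\f$ to commute with the representation, that rules this out.
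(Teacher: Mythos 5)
Your argument is correct, and it completes the proof along the same overall strategy as the paper (reduce to Krull--Schmidt in $\rep_f(\R\<X,Y\>)$, then show that a $\rep_f$-isomorphism between objects of $\mathfrak{R}$ forces an orthogonal isomorphism), but the key technical step is handled by a genuinely different argument. The paper's Proposition~\ref{lambdafi} proves that the given intertwiner $\f$ is itself a scalar multiple of an orthogonal map, using the rotation structure in an essential way: the angle $a(\d)$ is read off from the spectrum of $\d_\C$ and hence preserved under conjugation, the operator $\rho_\d=\frac{1}{\sin a(\d)}(\d-\cos a(\d)\I)$ intertwines with $\rho_\s$, so $\f$ preserves orthogonality on each plane $\spann\{v,\rho_\d(v)\}$, and irreducibility propagates the resulting scaling constant $\mu$ over all of $V$. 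Your polar-decomposition argument instead modifies $\f$: from $\f\d=\s\f$ and orthogonality of $\d,\s$ one gets that $\f^T\f$ commutes with the representation, hence so does $P=(\f^T\f)^{1/2}$, and $\psi=\f P^{-1}$ is an orthogonal intertwiner. This is more elementary and strictly more general --- it uses only that all four operators are orthogonal (not that they are rotations) and does not need irreducibility of the objects, so it shows outright that $\rep_f$-isomorphic orthogonal representations are orthogonally isomorphic. What the paper's route buys in exchange is the sharper normalisation statement about $\f$ itself (for irreducibles this also drops out of your computation, since the eigenspaces of the symmetric operator $\f^T\f$ are invariant subspaces, forcing $\f^T\f=\mu^2\I$), and incidentally introduces the map $\rho_\d$ that is reused later in the classification. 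One cosmetic remark: your justification of fact (i) via orthogonal complements is more than is needed --- an $\mathfrak{R}$-irreducible object has no proper non-zero invariant subspace at all, so it is trivially $\rep_f$-indecomposable; the complement argument is what gives the converse.
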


The above statement is not trivial. Since $\mathfrak{R}$ is not a
\emph{full} subcategory of $\rep_f(\R\<X,Y\>)$, isomorphism classes in the former are
\`a priori smaller than in the latter. Therefore the Krull-Schmidt theorem,
although certainly true in $\rep_f(\R\<X,Y\>)$, does not automatically carry over to
$\mathfrak{R}$.

A consequence of Propositions~\ref{decomp} and~\ref{ks} is the following:
If $V$ is a finite-dimensional Euclidean space, then every pair $(\d,\e)$ of rotations in
$V$ has a has a 2-dimensional invariant subspace $U\subset V$ if and only if $\dim V\equiv
2\mod4$. This result has been proven independently by Dieterich~\cite{rotations}, using
determinant calculus.

We complete the investigation of the category $\mathfrak{R}$ by giving a classification of
its irreducible objects. In view of Proposition~\ref{ks}, this amounts to classifying
$\mathfrak{R}$ itself.
By a classification is meant a list of pairwise non-isomorphic objects, exhausting all
isomorphism classes.

Let $\mathfrak{IR}$ be the full subcategory of $\mathfrak{R}$ consisting of all
irreducible finite-dimensional rotational representations.
We consider $\R^n$ as a Euclidean space, equipped with the standard scalar product. Linear
maps $\R^n\to\R^m$ are identified with $m\!\times\!n$\,-\,matrices in the natural way.
Given $\theta\in\R$, we write
\begin{equation} \label{ttheta}
  T_\theta= \begin{pmatrix} 1\\&R_\theta\\&&1 \end{pmatrix}\in\og_4(\R).
\end{equation}
\begin{prop} \label{class}
  The category $\mathfrak{IR}$ is classified by the following list of objects:
  \begin{align*}
    &(r,s)\quad &\mbox{where }\;&r,s\in\{-1,1\}, \\
    &(r\I_2,R_\b), (R_\a,s\I_2), (R_\a,R_{r\b})  &\mbox{where }
    \;&r,s\in\{-1,1\},\;\a,\b\in]0,\pi[, \\ 
    &\left(R_\a\oplus R_\a,
	T_\theta(R_\b\oplus R_\b)T_{-\theta} \right)
    &\mbox{where }\;& \a,\b,\theta\in]0,\pi[.
  \end{align*}
\end{prop}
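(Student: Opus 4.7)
The strategy is to invoke Proposition~\ref{decomp} to restrict attention to irreducible objects of dimension $1$, $2$, or $4$, and then classify in each dimension separately. Dimensions $1$ and $2$ are elementary; the real content is in dimension $4$.

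In dimension $1$, the only orthogonal maps are $\pm\I$, so the irreducibles are precisely the four pairs $(r,s)$. In dimension $2$, at least one of $\d,\e$ must be a proper rotation --- otherwise every $1$-dimensional subspace is jointly invariant --- which splits the analysis into the three subcases of the list. In the doubly-proper subcase I would fix $\d = R_\a$ via (\ref{ortnf}); then $\e$, being a proper $2\times 2$ rotation in the same basis, is either $R_\b$ or $R_{-\b}$, giving the two classes parametrized by $r \in \{-1,1\}$. Non-isomorphism in dimensions $1$ and $2$ follows from centralizer computations: e.g., any orthogonal intertwiner of $(R_\a,R_\b)$ and $(R_\a,R_{-\b})$ must commute with $R_\a$ and therefore lie in $SO(2)$, but no element of $SO(2)$ conjugates $R_\b$ to $R_{-\b}$ for $\b \in (0,\pi)$.

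For dimension $4$, I would first argue that neither $\d$ nor $\e$ can be $\pm\I_4$: the block decomposition furnished by (\ref{ortnf}) for the non-scalar operator would otherwise provide a jointly invariant decomposition into $2$-dimensional pieces. Hence both are proper rotations of some angles $\a,\b \in (0,\pi)$, and by (\ref{ortnf}) I can fix $\d = R_\a \oplus R_\a$ in an orthonormal basis. Writing $\e = \cos\b\cdot\I + \sin\b\cdot J_\e$ and $\d = \cos\a\cdot\I + \sin\a\cdot J_\d$ in terms of the associated orthogonal complex structures, the residual isomorphism freedom is conjugation by $C_{\og(V)}(\d) = C_{\og(V)}(J_\d)$, which under the identification $\R^4 = \C^2$ induced by $J_\d$ is a copy of $U(2)$. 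Thus, for fixed $\a, \b$, the class of $(\d,\e)$ is determined by the $U(2)$-orbit of $J_\e$ in the space of orthogonal complex structures on $\R^4$.

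The core calculation is this orbit analysis. Identifying $\R^4 = \H$ so that $J_\d$ is left multiplication by $i$, the set of orthogonal complex structures falls into two copies of $S^2$, one per orientation; a direct calculation shows that $C_{\og(V)}(\d) = U(1)_L \cdot Sp(1)_R$ acts transitively on the opposite-orientation sphere but only by rotation around $\pm J_\d$ on the same-orientation one. The explicit decomposition $\H = \C \oplus \C j$ shows that the opposite-orientation case, together with the polar points $J_\e = \pm J_\d$, all give commuting $\d,\e$ and hence reducible representations. The remaining orbits form a one-parameter family indexed by the angle $\theta \in (0,\pi)$ between $J_\d$ and $J_\e$; for each one the rotation $T_\theta$ realizes $J_\e = T_\theta J_\d T_{-\theta}$, yielding the normal form $\e = T_\theta(R_\b \oplus R_\b) T_{-\theta}$. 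Non-isomorphism across the full list then reduces to the invariance of dimension, of $a(\d)$ and $a(\e)$, of the sign $r$ in dimension $2$, and of $\theta$ in dimension $4$, the last being recoverable from $\operatorname{tr}(J_\d J_\e)$. I expect the main obstacle to be this orbit parametrization --- specifically, verifying that $\theta$ is a complete $U(2)$-invariant on the same-orientation sphere and that $T_\theta$ provides an explicit conjugator.
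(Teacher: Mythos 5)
Your proposal is correct, and its overall architecture --- reduce to dimensions $1,2,4$ via Proposition~\ref{decomp}, normalise the first rotation by the structure theorem, then classify the second up to the residual centralizer --- coincides with the paper's. Your decomposition $\e=\cos\b\,\I+\sin\b\,J_\e$ is exactly the paper's map $\rho_\e$ of Lemma~\ref{rho}, which is how the paper reduces the whole problem to pairs of rotations of angle $\tfrac{\pi}{2}$; and your treatment of dimensions $1$ and $2$ matches the paper's. Where you genuinely diverge is the $4$-dimensional case, which is the heart of the matter. The paper proceeds by bare hands: starting from a unit vector $e_1$ it builds an explicit orthonormal basis $e_1,\s(e_1),e_3,\s(e_3)$ in which $[\s]=R_{\pi/2}\oplus R_{\pi/2}$, computes the matrix of $\tau$ in (\ref{pretau}), eliminates the sign $r=-1$ by exhibiting an eigenvector of $\s^{-1}\tau$ with eigenvalue $1$ (hence a $2$-dimensional invariant subspace), and establishes completeness of the invariant $\theta$ via the computation $\<\s(v),\tau(v)\>=\cos\theta$ for every unit vector $v$. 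You instead run the $U(2)=U(1)_L\cdot Sp(1)_R$ orbit analysis on the two spheres of orthogonal complex structures of $\H$. Both routes are complete. Yours is more conceptual: it explains \emph{why} the answer is a one-parameter family (latitude circles on the same-orientation $S^2$, with the invariant $-\tfrac14\operatorname{tr}(J_\d J_\e)=\cos\theta$) and why the reducible cases are exactly the opposite-orientation sphere together with the poles $J_\e=\pm J_\d$ (the commuting cases). The paper's computation is more elementary, avoids quaternionic bookkeeping, and directly produces the explicit conjugator $T_\theta$ and the invariant in the form $\cos\theta=\<\s(e_1),\tau(e_1)\>$. The verifications you flag as remaining (transitivity of $U(1)_L$-conjugation on latitude circles, and that $T_\theta$ realises the latitude-$\theta$ structure) are routine and do go through, so I see no gap.
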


Our results indicate the contrast between the categories $\rep_{f}(\R\<X,Y\>)$ and
$\mathfrak{R}$.
The algebra $\R\<X,Y\>$ is wild, and the number of parameters of non-iso\-mor\-phic
indecomposable representations increases heavily with the dimension. By
Proposition~\ref{decomp}, $\mathfrak{R}$ has no indecomposable objects of dimension
greater than 4. 
The category $\rep_{f}(\R\<X,Y\>)$ is immense, the objects in $\mathfrak{R}$ are easily
visualised with geometric intuition.

The author's interest in rotational representations originates in the theory of real
(not necessarily associative) division algebras.
Proposition~\ref{class} plays an important role in the classification of the 8-dimensional
absolute valued algebras which have either a non-zero central idempotent or a one-sided
identity element \cite{ava}.
This is an instance of what appears to be a general pattern: The connection between
various classes of real division algebras and some ``geometric'' subcategory of the
representation category of a real associative algebra determined by the class of division
algebras.
As examples can be mentioned division algebras of dimension 2, which are related to
certain representations of $\R\<X,Y\>$, and 8-dimensional flexible quadratic division
algebras, which are parametrised by a non-full subcategory of $\rep_{f}(\R[X])$ (\cf\
\cite{2dim} and \cite{nform} respectively).

In Section~\ref{sdecomp} below, we prove Proposition~\ref{decomp}.
In addition a criterion is obtained, formulated in Proposition~\ref{clmt}, for when a pair
of rotations has a 2-dimensional invariant subspace.
In Section~\ref{sclass}, we show that Proposition~\ref{ks} can be deduced from the
Krull-Schmidt theorem for $\rep_f(\R\<X,Y\>)$. Finally, by classifying pairs of rotations
in Euclidean spaces of dimension at most 4, we attain the proof of Proposition~\ref{class}.

Henceforth, $V=(V,\<\,\>)$ will always denote a finite-dimensional Euclidean space.

\section{Decomposition of $V$ into invariant subspaces} \label{sdecomp}

Let $\d$ and $\e$ be rotations in $V$, with respective angles $\a$ and $\b$. 
We may assume that both $\d$ and $\e$ are proper rotations, otherwise the decomposition of
$V$ into 1- or 2-dimensional invariant subspaces follows directly from the structure
theorem for orthogonal operators. We say that a subspace $U\subset V$ is
$(\d,\e)$-invariant if it is invariant under both $\d$ and $\e$.

Consider the complexification $\vc=\C\ts_\R V$ of $V$ equipped with the induced inner
product: $\<\l\ts u,\mu\ts v\>=\l\bar{\mu}\<u,v\>$. We identify $V$ with the real subspace
$\R\ts V$ of $\vc$ in the canonical way. Elements in this subspace are called \emph{real}
vectors, whereas elements in $iV=\R i\ts V$ are said to be \emph{purely imaginary}.
As a real vector space, $\vc=V\oplus iV$. The complex conjugate of a vector 
$v=v_1+v_2\in V\oplus iV$ is $\bar{v}=v_1-v_2$, and the conjugation map
$\kappa:\vc\to\vc,\;v\mapsto \bar{v}$ is an antilinear\footnote{A map $T:V\to W$ between
  complex vector spaces is called antilinear if it is additive and $T(\l v)=\bar{\l}Tv$
  for all $\l\in\C,\;v\in V$. A theory for antilinear maps is developed in
  \cite{nakayama38,haantjes36,hong88,jac37,jac39}.}
operator on $\vc$. 
By $T_\C$ we denote the $\C$-linear endomorphism of $\vc$ induced by a real linear
endomorphism $T$ of $V$. 
We will make use of the fact that the conjugation map $\kappa$ induces bijections
$\ker(T_\C-\l\I_\vc)\to\ker(T_\C-\bar{\l}\I_\vc)$ for all eigenvalues $\l\in\C$ of $T_\C$.

The maps $\d_\C$ and $\e_\C$ give rise to decompositions $\vc=A\oplus B$ and 
$\vc=C\oplus D$, where
\begin{equation} \label{abcd}
\begin{aligned}
A&=\ker(\dc-e^{i\a}\I_\vc) & C&=\ker(\ec-e^{i\b}\I_\vc) \\
B&=\ker(\dc-e^{-i\a}\I_\vc) & D&=\ker(\ec-e^{-i\b}\I_\vc)
\end{aligned}
\end{equation}
and the summands in each decomposition are mutually orthogonal. As noted above,
$\kappa(A)=B$ and $\kappa(C)=D$.

Suppose $A\cap C$ is non-trivial, and $v\in(A\cap C)\minus\{0\}$. 
Then $\bar{v}\in B\cap D$, and $B\cap D\ne0$.
Hence both $v$ and $\bar{v}$ are common eigenvectors of $\dc$ and $\ec$, which in
particular means that $\spann_\C\{v,\bar{v}\}$ is invariant under $\dc$ and $\ec$.
On the other hand, $v+\bar{v},i(v-\bar{v})$ are real vectors, and 
$\spann_\C\{v+\bar{v},i(v-\bar{v})\}=\spann_\C\{v,\bar{v}\}$.
Hence $\spann_\R\{v+\bar{v},i(v-\bar{v})\}=V\cap\spann_\C\{v,\bar{v}\}$. This is a
2-dimensional (real) subspace of $V$, invariant under $\dc|_V=\d$ and $\ec|_V=\e$.

Certainly, the above argument goes through also when the roles of $C$ and $D$ are
interchanged. Thus we have shown, that whenever $A\cap(C\cup D)\ne0$, $V$ has a
2-dimensional subspace which is invariant under $\d$ and $\e$. 

The crucial property of the 2-dimensional subspace $\spann_\C\{v,\bar{v}\}\subset \vc$
above is, that its intersection with each one of the sets $A,B,C,D,V\subset\vc$ is
non-zero. This is precisely what is needed for $V$ to have a 2-dimensional
$(\d,\e)$-invariant subspace. More generally, the following holds.

\begin{lma} \label{U}
  If $U\subset \vc$ is a $\kappa$-invariant subspace such that
  \begin{equation} \label{Uekv}
    U=(A\cap U)\oplus(B\cap U)=(C\cap U)\oplus(D\cap U)
  \end{equation}
  then $U\cap V\subset V$ is invariant under $\d$ and $\e$, and 
  $\dim_\R(U\cap V)=\dim_\C U$.
  Conversely, if $W\subset V$ is an invariant subspace for $\d$ and $\e$, then its
  complexification $U=W^\C$ is a $\kappa$-invariant subspace satisfying (\ref{Uekv}).
\end{lma}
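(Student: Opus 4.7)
The statement splits cleanly into two directions, treated in turn, using that $A, B$ (resp.\ $C, D$) are the eigenspaces of $\dc$ (resp.\ $\ec$) for the eigenvalues $e^{\pm i\a}$ (resp.\ $e^{\pm i\b}$).

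For the forward direction, the decomposition $U = (A \cap U) \oplus (B \cap U)$ immediately exhibits $U$ as a sum of eigenspaces of $\dc$, so $\dc(U) \subseteq U$; the second decomposition gives $\ec(U) \subseteq U$. Combined with $\d = \dc|_V$ and $\e = \ec|_V$, this yields invariance of $U \cap V$ under $\d$ and $\e$. For the dimension identity I would exploit $\kappa$-invariance: for any $u \in U$ the vectors $u + \bar u$ and $(u - \bar u)/i$ lie in $U \cap V$, so $U = (U \cap V) \oplus i(U \cap V)$ as real vector spaces. Comparing real dimensions then gives $\dim_\R(U \cap V) = \dim_\C U$.

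For the converse, $U = W^\C = W \oplus iW$ is visibly $\kappa$-invariant; the content lies in the two decompositions of~(\ref{Uekv}). I would verify the first by observing that $\d|_W$ is a proper rotation of angle $\a$, hence annihilated by $(X - e^{i\a})(X - e^{-i\a}) = X^2 - 2\cos(\a)X + 1$. Consequently $\dc|_U$ is annihilated by the same polynomial, whose roots are distinct, so $\dc|_U$ is diagonalisable with eigenvalues in $\{e^{\pm i\a}\}$. This forces $U = (A \cap U) \oplus (B \cap U)$, and the analogous argument for $\e$ supplies the second equality.

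The only delicate point I anticipate is checking that $\d|_W$ really is a proper rotation of the \emph{same} angle $\a$ as $\d$. This follows from the defining conditions of a rotation recalled in Section~\ref{intro}, since the inner product on $W$ is the restriction of $\<\,\>$ and since a proper rotation of angle $\a\in{]0,\pi[}$ has no real eigenvalues, forcing any invariant $W$ to inherit the same angle. Once this is in hand, the minimal-polynomial step closes, and the remainder is routine bookkeeping between $\vc$, its eigenspace decompositions, and the real subspace $V$.
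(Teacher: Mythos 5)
Your proof is correct and follows essentially the same route as the paper: invariance of $U\cap V$ comes from $U$ being a sum of eigenspaces of $\dc$ and $\ec$, and the dimension count comes from the real/imaginary-part decomposition afforded by $\kappa$-invariance (the paper realises this via an explicit real basis $\{u+\bar u,\,i(u-\bar u)\}$ built from a basis of $A\cap U$, which is the same idea). The paper dismisses the converse as immediate; your minimal-polynomial argument, using that $\d$ satisfies $X^2-2\cos(\a)X+1$ with distinct roots $e^{\pm i\a}$, is a correct way to fill in that detail.
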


\begin{proof}
Suppose $U\subset \vc$ is a $\kappa$-invariant subspace satisfying (\ref{Uekv}), and let
$\underline{b}$ be a basis of $A\cap U$. Then 
$\underline{b}'=\{\bar{u}\mid u\in\underline{b}\}$ is a basis of $B\cap U$ and
$\underline{b}\cup\underline{b}'$ a basis of $U$. 
Taking $\underline{e}=\{u+\bar{u}, i(u-\bar{u})\mid u\in\underline{b}\}$, we get
$\spann_\C\underline{e}=\spann_\C(\underline{b}\cup\underline{b}')=U$. Hence
$\underline{e}$ is a basis of $U$.
Moreover, $\underline{e}\subset U\cap V$. A vector $v\in\spann_\C\underline{e}=U$
is real if and only if its coefficients in $\underline{e}$ are real numbers. Thus
$\spann_\R\underline{e}=U\cap V$ and consequently $\dim_\R U\cap V=\dim_\C U$.

The real space $V$ is invariant under $\dc,\ec$, and from (\ref{Uekv}) follows that so is
$U$. Therefore, the intersection $U\cap V$ is invariant under $\d=\dc|_V$ and
$\e=\ec|_V$.

The converse is immediate.
\end{proof}

In the case when $A\cap(C\cup D)=0$, $\vc$ may or may not have a 2-dimensional
$\kappa$-invariant subspace $U$ satisfying (\ref{Uekv}).
Below we construct a map $T$, which will be our tool to find subspaces of this type, of
dimension either 2 or 4.

Assume $A\cap C=A\cap D=0$. Let $P_A:\vc\to A$ and $P_B:\vc\to B$ be the orthogonal
projection maps onto $A$ and $B$ respectively. Since the subspaces $\ker P_A=B$ and 
$\ker P_B=A$ intersect $C$ trivially, the restricted maps $P_A|_C:C\to A$ and
$P_B|_C:C\to B$ are bijective. 
We define $T:A\to A$ as the composition of the following chain of maps:
\begin{equation} \label{T}
  A\stackrel{P_A|_C^{-1}}{\to}C\stackrel{P_B|_C}{\to}B\stackrel{\kappa}{\to}A
\end{equation}
\ie, $T=\kappa\circ P_B|_C\circ P_A|_C^{-1}$.
Clearly, $T$ being the composition of two linear and one antilinear map, itself is
antilinear. The factors of $T$ are bijections, hence $T$ is a bijection.

\begin{prop} \label{clmt}
  Let $V$ be a non-trivial Euclidean space of finite dimension, and $\d,\e$ rotations in
  $V$ with angles $\a,\b\in]0,\pi[$ respectively. Let $A,B,C,D\subset\vc$ be the subspaces
  defined by (\ref{abcd}). Now there exists a 2-dimensional subspace of $V$ which is
  invariant under $\d$ and $\e$ if and only if one of the following conditions is
  satisfied: 
  \begin{enumerate}
  \item Either of the intersections $A\cap C$ and $A\cap D$ is non-trivial.
  \item Both $A\cap C$ and $A\cap D$ are zero, and the antilinear operator 
    $T:A\to A$ defined by (\ref{T}) has a 1-dimensional invariant subspace.
  \end{enumerate}
\end{prop}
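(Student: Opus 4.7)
My plan is to invoke Lemma~\ref{U}, which translates the existence of a $2$-dimensional $(\d,\e)$-invariant subspace of $V$ into the existence of a $2$-dimensional $\kappa$-invariant subspace $U\subset\vc$ satisfying~(\ref{Uekv}). The sufficiency of condition~1 has already been established in the text preceding the proposition. What remains is to prove the sufficiency of condition~2, and the necessity of one of the two conditions when a $2$-dimensional invariant subspace exists.

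For the sufficiency of condition~2, I would assume $A\cap C=A\cap D=0$ and pick a nonzero $v\in A$ with $Tv=\l v$ for some $\l\in\C$; the natural candidate is $U:=\spann_\C\{v,\bar v\}$. Unwinding $T=\kappa\circ P_B|_C\circ P_A|_C^{-1}$ then yields an explicit preimage $c:=v+\bar\l\bar v\in C$, so $c\in C\cap U$, and dually $\bar c=\l v+\bar v\in D\cap U$. Verifying (\ref{Uekv}) reduces to two linear-independence claims: that $\{v,\bar v\}$ is $\C$-linearly independent (so $\dim_\C U=2$, giving $A\cap U=\C v$ and $B\cap U=\C\bar v$), and that $\{c,\bar c\}$ is $\C$-linearly independent. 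Both rest on the same principle, namely that no nonzero element of $A,B,C$ or $D$ can be proportional to a real vector when $\a,\b\in\,]0,\pi[$: e.g., $v\in A$ of the form $tw$ with $t\in\C$ and $w\in V$ would force $\d(w)=e^{i\a}w$ with $w\neq 0$, which is impossible since $\d w$ is real and $e^{i\a}$ is not.

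For the necessity, I would begin with a $2$-dimensional $(\d,\e)$-invariant subspace $W\subset V$ and set $U:=W^\C$. Lemma~\ref{U} gives~(\ref{Uekv}), and combined with $\kappa(A)=B$ and $\kappa(C)=D$, this forces $\dim_\C(A\cap U)=\dim_\C(C\cap U)=1$. Assuming that condition~1 fails, I would then choose nonzero $v\in A\cap U$ and $c\in C\cap U$, and expand $c=\mu v+\nu\bar v$ in the basis $\{v,\bar v\}$ of $U$. Since $A\oplus B=\vc$ is an orthogonal decomposition, we read off $P_A(c)=\mu v$ and $P_B(c)=\nu\bar v$, so $T(\mu v)=\kappa(\nu\bar v)=\bar\nu v$; antilinearity of $T$ then yields $\bar\mu\,Tv=\bar\nu\,v$, and once $\mu\neq 0$ is secured, $\C v$ is the sought $T$-invariant line. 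The case $\mu=0$ would give $\bar v\in C$, hence $B\cap C\neq 0$; applying $\kappa$ gives $A\cap D\neq 0$, contradicting the failure of condition~1.

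I expect the main technical obstacle to be the careful bookkeeping surrounding the antilinear operator $T$ alongside the two orthogonal decompositions $\vc=A\oplus B=C\oplus D$, with the conjugation $\kappa$ swapping summands. Once one systematically uses that $\kappa$ interchanges $A\leftrightarrow B$ and $C\leftrightarrow D$, and that the four eigenspaces contain no nonzero vector proportional to a real one, the calculations become largely tautological; the definition of $T$ itself encodes the correspondence between $T$-invariant lines in $A$ and $(\d,\e)$-invariant planes in $V$.
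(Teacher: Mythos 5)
Your proposal is correct and follows essentially the same route as the paper: Lemma~\ref{U} translates the problem, the eigenvector $v$ of $T$ yields $U=\spann_\C\{v,\bar v\}$ with $w=v+\bar\l\bar v\in C$ for sufficiency, and for necessity one shows $A\cap W^\C$ is a $T$-invariant line. Your version merely makes explicit a couple of points the paper leaves implicit (the linear independence of $v,\bar v$, and the exclusion of the degenerate case $\mu=0$ via $A\cap D\neq 0$), which is fine.
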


We remark that an antilinear operator $S$ has a 1-dimensional invariant subspace if and
only if $S^2$ has a non-negative real eigenvalue. Moreover, this is always the case if the
domain of $S$ is odd-dimensional. This is a consequence of the normal form
for antilinear operators given in \cite{haantjes36}.

\begin{proof}
We have already shown, that the first alternative in the proposition implies existence of
a 2-dimensional $(\d,\e)$-invariant subspace. 
Consider instead the case $A\cap(C\cup D)=0$. 

Assume $T$ has a 1-dimensional invariant subspace. This means that there exists a non-zero
vector $v\in A$ such that $Tv=\mu v$ for some $\mu\in\C$.
Setting $w=P_A|_C^{-1}v$, we have $P_Aw=v$ and $P_Bw=P_B|_CP_A|_C^{-1}v=\kappa
Tv=\bar{\mu}\bar{v}$. Hence $w=P_Aw+P_Bw=v+\bar{\mu}\bar{v}$. Since $w\in C$, this means
that $C\cap\spann_\C\{v,\bar{v}\}\neq0$. Now 
$\bar{w}=\kappa(w)=\bar{v}+\mu v\in D\cap\spann_\C\{v,\bar{v}\}$, whence
$D\cap\spann_\C\{v,\bar{v}\}\neq0$.

So $U=\spann_\C\{v,\bar{v}\}$ intersects each of $A,B,C,D$ non-trivially. 
Hence $U=(A\cap U)\oplus(B\cap U)=(C\cap U)\oplus(D\cap U)$. Obviously $\kappa(U)=U$, and
Lemma~\ref{U} now asserts that $U\cap V$ is a 2-dimensional subspace of $V$, invariant
under $\d$ and $\e$.

Assume instead there exists a real subspace $W\subset V$ of dimension 2, which is
invariant under $\d$ and $\e$. By Lemma~\ref{U}, $\kappa(W^\C)=W^\C$ and 
$W^\C=(A\cap W^\C)\oplus(B\cap W^\C)=(C\cap W^\C)\oplus(D\cap W^\C)$. This implies that
$W^\C$ is invariant under $P_A$, $P_B$ and $P_A|_C^{-1}$ (the last of which is defined
since we have assumed $A\cap(C\cup D)=0$). Thus $A\cap W^\C$ is invariant under $T$. 
As $\dim(A\cap W^\C)=1$, we are done.
\end{proof}

Along the same lines, we can now prove Proposition~\ref{decomp}. Instead of the
antilinear map $T$, which not necessarily has a 1-dimensional invariant subspace, we
consider the linear map $T^2:A\to A$.

Since every object in $\mathfrak{R}$ can be written as a sum of irreducibles, it suffices
to show that for any two rotations $\d,\e$ in $V$, there exists a $(\d,\e)$-invariant
subspace $W\subset V$ of dimension at most 4.
We may assume that $A\cap(C\cup D)=0$, since otherwise $V$ has a $(\d,\e)$-invariant
subspace of dimension 2. Now $T:A\to A$ is defined, and $T^2$ is a linear endomorphism of
the complex vector space $A$. Thus it has an eigenvalue $\l\in\C$. Let $u\in A$ be a
corresponding, non-zero eigenvector. Set $v=Tu$ and $w=P_A|_C^{-1}u$. 
We get $\kappa P_B w=\kappa(P_B|_C)(P_A|_C^{-1})u=Tu=v$ and $P_Av=u$, so $w=u+\bar{v}$.
Similarly, on setting $z=P_A|_C^{-1}v$ we obtain 
\begin{equation*}
  \kappa P_B z=\kappa(P_B|_C)(P_A|_C^{-1})v=\kappa(P_B|_C)(P_A|_C^{-1})Tu=T^2u=\l u.
\end{equation*}
Thus $P_Bz=\bar{\l}\bar{u}$ and $z=P_Az+P_Bz=v+\bar{\l}\bar{u}$.

If $u$ and $v$ are linearly dependent, then $\spann_\C\{u\}$ is an invariant subspace for $T$
of dimension 1 and hence, by Proposition~\ref{clmt}, $V$ has a 2-dimensional
$(\d,\e)$-invariant subspace.

If $u,v$ are linearly independent, then so are $w,z$. Set
$U=\spann_\C\{u,v,\bar{u},\bar{v}\}$. Since now $w,z\in C\cap U$, we have
$\bar{w},\bar{z}\in D\cap U$. Thus $\dim_\C(C\cap U)=\dim_\C(D\cap U)=2$ and 
$U=(C\cap U)\oplus(D\cap U)$. Moreover $U=(A\cap U)\oplus(B\cap U)$, since 
$u,v\in A\cap U$ and $\bar{u},\bar{v}\in B\cap U$. 
Lemma~\ref{U} now implies that $U\cap V$ is invariant under $\d$ and $\e$, and
$\dim_\R(U\cap V)=\dim_\C U=4$.
This proves Proposition~\ref{decomp}.

\section{Classification of the finite-dimensional rotational representations of $\R\<X,Y\>$}
\label{sclass}
Denote by $\mathfrak{IR}_p$ and $\mathfrak{IR}_o$ the full subcategories of
$\mathfrak{IR}$ consisting of representations given, respectively, by proper rotations and
rotations with angle $\frac{\pi}{2}$.
If $\mathcal{A}$ is a subcategory of $\mathfrak{IR}$ and $n$ a natural number,
$(\mathcal{A})_n$ denotes the full subcategory of $\mathcal{A}$ formed by its
$n$-dimensional objects.
We write $\mathcal{R}$ for the set of
proper rotations in $V$, and $\mathcal{R}_\a$ for the set of rotations in $V$ with angle $\a$.
For $\d\in\mathcal{R}$, we define the map $\rho_\d:V\to V$ by
$\rho_\d(v)=\frac{1}{\sin\a}P_{v^\perp}\d(v)=\frac{1}{\sin \a}(\d(v)-\cos\a\,v)$.

Let $(\d_i,\e_i), (\d'_j,\e'_j) \in \mathfrak{IR}$ for $i\le k$ and $j\le l$.
Suppose 
\begin{equation*}
  \bigoplus_{i=1}^k(\d_i,\e_i) \simeq \bigoplus_{j=1}^l(\d'_j,\e'_j)
\end{equation*}
in $\rep_f(\R(\<X,Y\>)$.
Then, because of the Krull-Schmidt theorem, $k=l$ and there exists
a permutation $f\in S_k$ and isomorphisms $\f_i:(\d_{f(i)},\e_{f(i)})\to(\d'_i,\e'_i)$ in
$\rep_f(\R\<X,Y\>)$ for all $i\le k$.
The following proposition shows that any pair of irreducible rotational representations
which are isomorphic in $\rep_f(\R(\<X,Y\>)$ are also isomorphic in $\mathfrak{R}$. In
view of the above, this establishes the proof of Proposition~\ref{ks}. 

\begin{prop} \label{lambdafi}
  Let $(\d,\e),(\s,\tau)\in\mathfrak{IR}$. If $\f:(\d,\e)\to(\s,\tau)$ is an isomorphism
  in $\rep_f(\R(\<X,Y\>)$, then there exists a number $\l\in\R$ such that $\l\f$ is
  an orthogonal map (and thus an isomorphism in $\mathfrak{R}$).
\end{prop}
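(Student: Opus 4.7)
The plan is to exploit the fact that $\s$ and $\tau$ are orthogonal, so taking adjoints turns intertwiners into intertwiners. Specifically, I would first show that the adjoint $\f^*:W\to V$ of $\f$ is itself a module homomorphism. Since $\s^{-1}=\s^*$ and $\tau^{-1}=\tau^*$, applying the adjoint to $\f\d=\s\f$ and $\f\e=\tau\f$ yields $\f^*\s=\d\f^*$ and $\f^*\tau=\e\f^*$. Consequently the composition $\f^*\f:V\to V$ commutes with both $\d$ and $\e$, i.e.\ it is an endomorphism of the representation $(\d,\e)$ in $\rep_f(\R\<X,Y\>)$.

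Next I would observe that $\f^*\f$ is self-adjoint and positive definite (it is the Gram operator of an isomorphism, so $\<\f^*\f v,v\>=\<\f v,\f v\>>0$ for $v\ne0$). By the real spectral theorem, $V$ decomposes as an orthogonal direct sum $V=\bigoplus_{\mu>0}V_\mu$ of eigenspaces $V_\mu=\ker(\f^*\f-\mu\I_V)$. Because $\f^*\f$ commutes with $\d$ and $\e$, each $V_\mu$ is $(\d,\e)$-invariant. Now the irreducibility hypothesis on $(\d,\e)\in\mathfrak{IR}$ forces exactly one eigenspace to be non-zero, say $V=V_{\mu_0}$ for some $\mu_0>0$. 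Hence $\f^*\f=\mu_0\I_V$.

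Finally, setting $\l=\mu_0^{-1/2}\in\R$, one checks directly that $(\l\f)^*(\l\f)=\l^2\mu_0\I_V=\I_V$, so $\l\f$ is orthogonal. Being an orthogonal bijection intertwining the two representations, it is an isomorphism in $\mathfrak{R}$ in the sense of the definition given in Section~\ref{intro} (with trivial kernel).

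I do not foresee a real obstacle here; the argument is essentially ``Schur + spectral theorem for a positive self-adjoint intertwiner''. The only point requiring a moment's care is the very first one, namely verifying that the Euclidean adjoint $\f^*$ is again a morphism of $\R\<X,Y\>$-modules, which relies crucially on the orthogonality of $\s$ and $\tau$ (so that $\s^*$ and $\tau^*$ are $\s^{-1}$ and $\tau^{-1}$, representing the same action up to inversion and thus preserving the intertwining relation). Everything else is standard linear algebra combined with the irreducibility assumption on $(\d,\e)$.
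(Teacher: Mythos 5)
Your argument is correct, but it is genuinely different from the one in the paper. You run the standard ``unitarization of an intertwiner'' argument: since all four operators are orthogonal, the Euclidean adjoint $\f^*$ intertwines $(\s,\tau)$ back to $(\d,\e)$ (note this uses the orthogonality of $\d,\e$ as well as of $\s,\tau$, not just the latter, when you pass from $\d^*\f^*=\f^*\s^*$ to $\f^*\s=\d\f^*$); then $\f^*\f$ is a positive self-adjoint endomorphism of the representation $(\d,\e)$, its eigenspaces are orthogonal invariant subspaces, and irreducibility forces $\f^*\f=\mu_0\I_V$, whence $\mu_0^{-1/2}\f$ is orthogonal. This is clean, complete, and in fact more general than needed: it works verbatim for an isomorphism between irreducible representations of any algebra by orthogonal operators, making no use of the hypothesis that $\d,\e,\s,\tau$ are \emph{rotations}. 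The paper instead argues geometrically and specifically about rotations: it first observes that the angle is a spectral invariant, so $a(\s)=a(\d)$ and $a(\tau)=a(\e)$; it then uses the operator $\rho_\d=\frac{1}{\sin a(\d)}(\d-\cos a(\d)\,\I)$, which satisfies $\rho_\s\f=\f\rho_\d$ and $\<v,\rho_\d(v)\>=0$, to show that $\f$ is conformal on each plane $\spann\{v,\rho_\d(v)\}$ with a fixed ratio $\mu$, and propagates this ratio over $\R\<\d,\e\>v=V$ by irreducibility. Your route buys generality and avoids the $\rho_\d$ machinery (at the cost of not reusing it, as the paper does later for the classification); the paper's route stays within the elementary geometry of rotations. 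Either way the conclusion and the use of irreducibility are the same, and your proof is acceptable as written.
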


\begin{proof}
If $\f:(\d,\e)\to(\s,\tau)$ is such an isomorphism, then $\s=\f\d\f^{-1}$ and $\tau=\f\e\f^{-1}$.
The complexification $\dc$ of $\d$ has eigenvalues $e^{ia(\d)}$ and $e^{-ia(\d)}$. This
implies that $a(\d)\in[0,\pi]$ is determined by the spectrum of $\dc$, which is invariant
under conjugation with $\f$. The same certainly holds for $a(\e)$.
So $a(\s)=a(\d)$ and $a(\tau)=a(\e)$. 

Now, as $\rho_\d=\frac{1}{\sin a(\d)}(\d-\cos a(\d)\,\I)$, we have
$\rho_\s\f=\f\rho_\d$. If $v$ is any vector in the space $V$ carrying $(\d,\e)$, then
$\<\f(v),\f(\rho_\d(v))\>=\<\f(v),\rho_\s(\f(v))\>=0$. This means that $\f$ restricted to
$\spann\{v,\rho_\d(v)\}$ preserves orthogonality, or equivalently, that there exists a
number $\mu\in\R\minus\{0\}$ such that $\|\f(u)\|=\mu\|u\|$ for all
$u\in\spann\{v,\rho_\d(v)\}$.

A similar argument shows that $\|\f(u)\|=\mu\|u\|$ whenever $u\in\spann\{v,\rho_\e(v)\}$.
Indeed, this extends to be true for all $u$ in $\R\<\d,\e\>v$, the subrepresentation of
$(\d,\e)$ generated by $v$. Since $(\d,\e)$ is irreducible, $\R\<\d,\e\>v=V$ if
$v\neq0$. Consequently, $\|\f(u)\|=\mu\|u\|$ for all $u\in V$, and thus the map
$\frac{1}{\mu}\f$ is orthogonal. 
\end{proof}

We proceed to prove Proposition~\ref{class}.

\begin{lma} \label{rho}
\begin{enumerate}
\item For any proper rotation $\d$ in $V$, the map $\rho_\d$ is a rotation with angle
  $\frac{\pi}{2}$. 
\item The map $\mathfrak{d}:\mathcal{R}\to\mathcal{R}_{\pi/2}\times]0,\pi[,\;
\d\mapsto(\rho_\d,a(\d))$ is a bijection.
\item Suppose $\d,\e\in\mathcal{R}$. The representation $(\d,\e)$ is irreducible if and
  only if $(\rho_\d,\rho_\e)$ is irreducible.
\item If $\mathcal{C}\subset\mathcal{R}_{\pi/2}^2$ classifies $\mathfrak{IR}_o$, then 
$\{(\mathfrak{d}^{-1}(\s,\a),\mathfrak{d}^{-1}(\tau,\b))\}_
{(\s,\tau)\in\mathcal{C},\:\a,\b\in]0,\pi[}$ classifies $\mathfrak{IR}_p$.
\end{enumerate}
\end{lma}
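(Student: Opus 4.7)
The plan is to handle the four parts in order, using the structure theorem for orthogonal operators as the main tool for the first two parts, and then reducing parts (3) and (4) to simple linear-algebra observations together with the bijection established in (2).

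For part (1), I would fix a proper rotation $\d$ with angle $\a$ and choose an orthonormal basis in which $[\d]=R_\a\oplus\cdots\oplus R_\a$. A direct computation gives $R_\a-\cos\a\,\I_2=\sin\a\,R_{\pi/2}$, so in the same basis $[\rho_\d]=R_{\pi/2}\oplus\cdots\oplus R_{\pi/2}$. In particular, $\rho_\d$ is orthogonal, sends every unit vector $v$ to a unit vector orthogonal to $v$, and satisfies $\rho_\d^2(v)=-v\in\spann\{v,\rho_\d(v)\}$; hence it is a rotation with angle $\pi/2$.

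For part (2), I would exhibit the inverse explicitly: send $(\s,\a)\in\mathcal{R}_{\pi/2}\times]0,\pi[$ to $\cos\a\,\I_V+\sin\a\,\s$. Picking an orthonormal basis in which $[\s]=R_{\pi/2}\oplus\cdots\oplus R_{\pi/2}$, one gets $\cos\a\,\I_2+\sin\a\,R_{\pi/2}=R_\a$ in each block, so the result is a proper rotation with angle $\a$. Both round-trip compositions are immediate from the formulas $\d=\cos\a\,\I+\sin\a\,\rho_\d$ and $\rho_\d=\frac{1}{\sin\a}(\d-\cos\a\,\I)$, establishing bijectivity.

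For part (3), the point is that a linear subspace $W\subset V$ is $\d$-invariant if and only if it is $\rho_\d$-invariant, because $\rho_\d$ and $\d$ are each a real-affine combination of the other and $\I_V$. So a subspace is $(\d,\e)$-invariant if and only if it is $(\rho_\d,\rho_\e)$-invariant, whence irreducibility transfers in both directions.

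Part (4) is the main point and requires a little more care because $\mathfrak{IR}_o$ and $\mathfrak{IR}_p$ are non-full subcategories of $\rep_f(\R\<X,Y\>)$. Given $(\d,\e)\in\mathfrak{IR}_p$ with angles $\a,\b$, part (3) places $(\rho_\d,\rho_\e)$ in $\mathfrak{IR}_o$, so by hypothesis there is $(\s,\tau)\in\mathcal{C}$ and an orthogonal isomorphism $\f:(\rho_\d,\rho_\e)\to(\s,\tau)$. The key observation is that the same $\f$ intertwines $\d=\cos\a\,\I+\sin\a\,\rho_\d$ with $\cos\a\,\I+\sin\a\,\s=\mathfrak{d}^{-1}(\s,\a)$, and analogously for $\e$; so $\f$ is an isomorphism in $\mathfrak{R}$ between $(\d,\e)$ and $(\mathfrak{d}^{-1}(\s,\a),\mathfrak{d}^{-1}(\tau,\b))$. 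Conversely, if two such listed objects $(\mathfrak{d}^{-1}(\s,\a),\mathfrak{d}^{-1}(\tau,\b))$ and $(\mathfrak{d}^{-1}(\s',\a'),\mathfrak{d}^{-1}(\tau',\b'))$ are isomorphic in $\mathfrak{R}$, then the angles of the operators are preserved by any orthogonal intertwiner, forcing $\a=\a',\b=\b'$; and the intertwiner then also intertwines $\s$ with $\s'$ and $\tau$ with $\tau'$, giving an isomorphism in $\mathfrak{IR}_o$, which by the assumed classification forces $(\s,\tau)=(\s',\tau')$. The only subtlety here, and the step to be stated most carefully, is this back-and-forth transfer of orthogonal isomorphisms between the two subcategories.
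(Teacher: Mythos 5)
Your proof is correct. Parts (1) and (2) follow essentially the same route as the paper: the paper verifies directly from the definition that $\rho_\d$ is orthogonal with $\<v,\rho_\d(v)\>=0$, while you read everything off the normal form $R_\a\oplus\cdots\oplus R_\a$ via the identity $R_\a-\cos\a\,\I_2=\sin\a\,R_{\pi/2}$; for (2) your explicit inverse $(\s,\a)\mapsto\cos\a\,\I_V+\sin\a\,\s$ is exactly the preimage description the paper gives. The genuine divergence is in part (3): the paper first invokes Proposition~\ref{decomp} to reduce to the $4$-dimensional case (two-dimensional pairs of proper rotations being automatically irreducible) and then argues through a pointwise linear-independence criterion, whereas you simply note that $\d$ and $\rho_\d$ are each an affine combination of the other and $\I_V$, hence have identical invariant subspaces, so $(\d,\e)$ and $(\rho_\d,\rho_\e)$ have the same lattice of invariant subspaces. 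Your argument is shorter, works uniformly in every dimension, and does not lean on Proposition~\ref{decomp} at all; the paper's version buys nothing extra here. For part (4) the paper merely declares it an immediate consequence of (2) and (3); your explicit two-way transfer of orthogonal intertwiners --- using that orthogonal conjugation preserves angles and that an intertwiner for $\rho_\d$ is automatically one for $\d$ and conversely --- is precisely the content that makes it immediate, and you correctly identify the non-fullness of $\mathfrak{R}$ in $\rep_f(\R\<X,Y\>)$ as the only point requiring care.
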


\begin{proof}
Let $\d\in\mathcal{R}_\a$.
Clearly, $\rho_\d$ is linear.
Since $\d(v)=\cos\a\,v + P_{v^\perp}\d(v)$, we have
$\|P_{v^\perp}\d(v)\|=\sin\a\|v\|$ and thus
$\|\rho_\d(v)\|=\|v\|$. This means that $\rho_\d$ is orthogonal.
Since $\<v,\rho_\d(v)\>=0$ for all $v\in V$, indeed
$\rho_\d\in\mathcal{R}_{\pi/2}$.

Let $\s\in\mathcal{R}_{\pi/2}$. There exists a basis $\underline{e}$ of $V$ such that
$[\s]_{\underline{e}}=R_{\frac{\pi}{2}}\oplus\cdots\oplus R_{\frac{\pi}{2}}$.
The preimage of $\s$ under the map $\mathcal{R}\to\mathcal{R}_{\pi/2},\;\d\mapsto\rho_\d$
is the set of $\d\in\mathcal{R}$ for which 
$[\d]_{\underline{e}}=R_{a(\d)}\oplus\cdots\oplus R_{a(\d)}$. Hence, for fixed
$\a\in]0,\pi[$ there exists precisely one $\d\in\mathcal{R}_\a$ such that
$\rho_\d=\s$. This proves the second statement in the lemma.

We have seen (Proposition~\ref{decomp}) that every irreducible rotational representation
of $\R\<X,Y\>$ has dimension at most 4. On the other hand, every 2-dimensional
representation given by proper rotations is necessarily irreducible. 
Thus, to prove 3 remains only the 4-dimensional case.
Let $\dim V=4$ and $\d,\e\in\mathcal{R}$.
The representation $(\d,\e)$ is irreducible if
and only if there exists a non-zero vector $v\in V$ such that $\d(v)$ and $\e(v)$ are
linearly independent. This is equivalent to that $\rho_\d(v)$ and $\rho_\e(v)$ are
linearly independent, which happens if and only if the representation
$(\rho_\d(v),\rho_\e(v))$ is irreducible.

Number 4 is an immediate consequence of 2 and 3.
\end{proof}

The virtue of Lemma~\ref{rho} is that it effectively reduces the classification problem
for $\mathfrak{IR}$ to the corresponding problem for $\mathfrak{IR}_o$.

From the structure theorem for orthogonal endomorphisms immediately follows, that the
finite-dimensional irreducible rotational representations which are not in
$\mathfrak{IR}_p$ are classified by 
\begin{align*}
  &\{(r,s)\}_{r,s\in\{-1,1\}}\;\cup \\
  &\{(r,R_\a), (R_\a,r)\}_{r\in\{-1,1\},\:\a\in]0,\pi[}.
\end{align*}

Let $V$ be of dimension 2, and $\s,\tau\in\mathcal{R}_{\pi/2}$. For
any $v\in V\minus\{0\}$ we have $\s(v),\tau(v)\in v^\perp$ and hence $\s(v)=r\tau(v)$,
where $r=\pm1$. Since $\s(v)=\tau(v)$ implies $\s=\tau$, $r$ does not depend on $v$.
Thus, every 2-dimensional object in $\mathfrak{IR}_o$ is isomorphic to either
$(R_{\frac{\pi}{2}},R_{\frac{\pi}{2}})$ or $(R_{\frac{\pi}{2}},R_{-\frac{\pi}{2}})$.
By Lemma~\ref{rho}, this means that
\begin{equation*}
\{(R_\a,R_\b),(R_\a,R_{-\b})\}_{\a,\b\in]0,\pi[}
\end{equation*}
classifies $(\mathfrak{IR}_p)_2$.

It remains to consider the 4-dimensional case. Suppose $\dim V=4$,
$\s,\tau\in\mathcal{R}_{\pi/2}$, and that the representation $(\s,\tau)$ is irreducible.
We shall construct a basis of $V$, with respect to which the matrices of $\s$ and $\tau$
take certain, canonical forms.
Let $e_1\in V$ be any unit vector, and $e_2=\s(e_1)$. Since $(\s,\tau)$ is irreducible,
$\tau(e_1)\not\in\spann\{e_1,e_2\}$. Take
$e_3=\frac{1}{\sqrt{1-\<e_2,\tau(e_1)\>^2}}P_{e_2^\perp}\tau(e_1)$. This is a unit vector
orthogonal to $e_1,e_2$. Finally, by setting $e_4=\s(e_3)$ we get an orthonormal basis
$\underline{e}$ of $V$, such that
\begin{equation} \label{sigma}
[\s]_{\underline{e}}=R_{\frac{\pi}{2}}\!\oplus\!R_{\frac{\pi}{2}}.
\end{equation}

Now let $f_1=e_1$, and $f_2=\tau(f_1)=\tau(e_1)$. Since $f_2\in\spann\{e_2,e_3\}$ and
$\<f_2,e_3\>>0$, we have
$f_2=\cos\theta\, e_2+\sin\theta\, e_3$ for $\theta=\arccos\<f_2,e_2\>\!\in\,]0,\pi[$.
Set $f_3=-\sin\theta\, e_2+\cos\theta\,e_3$ and
$f_4=\tau(f_3)$.
As $f_4\in\{f_1,f_2,f_3\}^\perp=\{e_1,e_2,e_3\}^\perp$, it follows that $f_4=r e_4$, where
$r=\in\{-1,1\}$. Denoting $\underline{f}=(f_1,f_2,f_3,f_4)$, we get
$[\tau]_{\underline{f}}=R_{\frac{\pi}{2}}\!\oplus\!R_{\frac{\pi}{2}}$.
An easy calculation shows that
\begin{align}
  [\tau]_{\underline{e}}&=
  \begin{pmatrix}
    0 & -\cos\theta & -\sin\theta & 0 \\
    \cos\theta & 0 & 0 & r\sin\theta \\
    \sin\theta & 0 & 0 & -r\cos\theta \\
    0 & -r\sin\theta & r\cos\theta & 0
  \end{pmatrix} \label{pretau} \\
  \intertext{and}
  [\s^{-1}\tau]_{\underline{e}}&=
  \begin{pmatrix}
    \cos\theta & 0 & 0 & r\sin\theta \\
    0 & \cos\theta & \sin\theta & 0 \\
    0 & -r\sin\theta & r\cos\theta & 0 \\
    -\sin\theta & 0 & 0 & r\cos\theta
  \end{pmatrix}. \nonumber
\end{align}

If $r=-1$, then $u=\sin\theta\,e_1+(\cos\theta-1)e_2$ is an eigenvector of $\s^{-1}\tau$
with eigenvalue $1$. This means that $\s(u)=\tau(u)$, and thus that $\spann\{u,\s(u)\}$ is
invariant under both $\s$ and $\tau$, which contradicts the irreducibility of the
representation $(\s,\tau)$. Hence $r=1$.

For any unit vector $v=\sum_{i=1}^4v_ie_i\in V$, we have
\begin{multline*}
  \<\s(v),\tau(v)\>=\<v,\s^{-1}\tau(v)\>=\sum_{i,j=1}^4v_iv_j\<e_i,\s^{-1}\tau(e_j)\>=\\
  =\sum_{i=1}^4v_i^2\<e_i,\s^{-1}\tau(e_i)\>=\|v\|^2\cos\theta=\cos\theta=
  \<\s(e_1),\tau(e_1)\>.
\end{multline*}
Hence, the number $\theta=\arccos\<\s(e_1),\tau(e_1)\>$ is an invariant for the
representation $(\s,\tau)$.
On the other hand, it completely determines the representation.
The identity (\ref{pretau}), with $r$ now specified to $1$, can be rewritten as 
\begin{equation} \label{tau}
  [\tau]_{\underline{e}}= T_\theta(R_{\frac{\pi}{2}}\oplus R_{\frac{\pi}{2}})T_{-\theta}
\end{equation}
where $T_\theta$ is defined by (\ref{ttheta}). This, together with (\ref{sigma}), implies
that 
$\{(R_{\frac{\pi}{2}}\oplus R_{\frac{\pi}{2}},T_\theta(R_{\frac{\pi}{2}}\oplus
R_{\frac{\pi}{2}})T_{-\theta})\}_{\theta\in]0,\pi[}$ classifies
$(\mathfrak{IR}_o)_4$. Lemma~\ref{rho} now gives the result for $(\mathfrak{IR}_p)_4$,
completing the proof of Proposition~\ref{class}.

\bibliographystyle{plain}
\bibliography{../litt.bib}

\begin{thebibliography}{1}

\bibitem{nakayama38}
K.~Asano and T.~Nakayama.
\newblock \"{U}ber halblineare {T}ransformationen.
\newblock {\em Mathematische Annalen}, 115:87--114, 1938.

\bibitem{ava}
J.A. Cuenca~Mira, E.~Darp\"o, and E.~Dieterich.
\newblock Finite-dimensional absolute valued algebras with a central
  idempotent.
\newblock To appear.

\bibitem{nform}
E. Darp\"o.
\newblock Normal forms for the $\mathcal{G}_2$-action on the real symmetric
  $7\times7$-matrices by conjugation.
\newblock {\em Journal of Algebra}, 312(2):668--688, 2007.

\bibitem{2dim}
E. Dieterich.
\newblock Classification, automorphism groups and categorical structure of the
  two-dimensional real division algebras.
\newblock {\em Journal of Algebra and its applications}, 4:517--538, 2005.

\bibitem{rotations}
E. Dieterich.
\newblock Existence and construction of two-dimensional invariant subspaces for
  pairs of rotations.
\newblock U.U.D.M. Report 2007:24, Uppsala University, 2007.

\bibitem{gantmacher}
F.R. Gantmacher.
\newblock Matrizenrechnung I.
\newblock {VEB} {D}eutscher {V}erlag der {W}issenschaften, 1958.

\bibitem{haantjes36}
J.~Haantjes.
\newblock Klassifikation der antilinearen {T}ransformationen.
\newblock {\em Mathematische Annalen}, 112:98--106, 1936.

\bibitem{hong88}
Y.P. Hong and R.~Horn.
\newblock A canonical form for matrices under consimilarity.
\newblock {\em Linear algebra and its applications}, 102:143--168, 1988.

\bibitem{jac37}
N. Jacobson.
\newblock Pseudo-linear transformations.
\newblock {\em Annals of mathematics}, 38(2):484--506, 1937.

\bibitem{jac39}
N. Jacobson.
\newblock Normal semi-linear transformations.
\newblock {\em American journal of mathematics}, 61(1):45--58, 1939.

\end{thebibliography}

\end{document}